\documentclass{amsart}
\usepackage{amsfonts}
\usepackage{amssymb}
\usepackage{times}

\def\ud{\;\dot{\cup} \;}
{\theoremstyle{plain}%
  \newtheorem{theorem}{Theorem}[section]
  
  \newtheorem{proposition}{Proposition}[section]
  \newtheorem{lemma}{Lemma}[section]

  \newtheorem{definition}{Definition}[section]

\newtheorem{remark}{Remark}[section]


\newfont{\hueca}{msbm10}

\begin{document}

\title{Leibniz triple systems admitting a multiplicative basis}

\thanks{The first, second and fourth authors acknowledge financial assistance by the Centre for Mathematics of the University of Coimbra -- UID/MAT/00324/2013, funded by the Portuguese Government through FCT/MEC and co-funded by the European Regional Development Fund through the Partnership Agreement PT2020. Third and fourth authors are supported by the PCI of the UCA `Teor\'\i a de Lie y Teor\'\i a de Espacios de Banach', by the PAI with project numbers FQM298, FQM7156 and by the project of the Spanish Ministerio de Educaci\'on y Ciencia  MTM2013-41208P. The fourth author acknowledges the Funda\c{c}\~{a}o para a Ci\^{e}ncia e a Tecnologia for the grant with reference SFRH/BPD/101675/2014.}

\author[H. Albuquerque ]{Helena~Albuquerque}

\address{Helena~Albuquerque, CMUC, Departamento de Matem\'atica, Universidade de Coimbra, Apartado 3008,
3001-454 Coimbra, Portugal. \hspace{0.1cm} {\em E-mail address}: {\tt lena@mat.uc.pt}}

\author[E. Barreiro]{Elisabete~Barreiro}

\address{Elisabete~Barreiro, CMUC, Departamento de Matem\'atica, Universidade de Coimbra, Apartado 3008,
3001-454 Coimbra, Portugal. \hspace{0.1cm} {\em E-mail address}: {\tt mefb@mat.uc.pt}}{}

\author[A.J. Calder\'on]{A.J.~Calder\'on}

\address{A.J.~Calder\'on, Departamento de Matem\'aticas, Universidad de C\'adiz, Campus de Puerto Real, 11510, Puerto Real, C\'adiz, Espa\~na. \hspace{0.1cm} {\em E-mail address}: {\tt ajesus.calderon@uca.es}}{}

\author[Jos\'{e} M. S\'{a}nchez-Delgado]{Jos\'{e} M. S\'{a}nchez-Delgado}

\address{Jos\'{e} M. S\'{a}nchez-Delgado, CMUC, Departamento de Matem\'atica, Universidade de Coimbra, Apartado 3008, 3001-454 Coimbra, Portugal. \hspace{0.1cm} {\em E-mail address}: {\tt txema.sanchez@mat.uc.pt}}

\begin{abstract} Let $(T,\langle \cdot, \cdot, \cdot \rangle)$ be a Leibniz triple system of arbitrary dimension, over an arbitrary base field ${\mathbb F}$. A basis ${\mathcal B} = \{e_{i}\}_{i \in I}$ of $T$ is called multiplicative if for any $i,j,k \in I$ we have that  $\langle e_i,e_j,e_k\rangle\in {\mathbb F}e_r$ for some $r \in I$. We show that if $T$ admits a multiplicative basis then it decomposes as the orthogonal  direct sum $T= \bigoplus_k{\mathfrak I}_k$ of well-described ideals ${\mathfrak I}_k$ admitting each one a multiplicative basis. Also the minimality of $T$ is characterized in terms of the multiplicative basis and it is shown that, under a mild condition, the above direct sum is by means of the family of its minimal ideals.

\medskip

{\it Keywords}: Multiplicative basis, Leibniz triple system, non-associative triple system, infinite dimensional triple system, structure theory.

{\it 2010 MSC}: 17A32, 17A40, 17A60.

\end{abstract}

\maketitle

\section{Introduction and previous definitions}

The notion of Leibniz algebra was introduced by Loday \cite{Loday} as a ``non-antisymmetric" generalization of Lie algebras. So far, many results on this kind of algebras have been considered in the framework of low dimensional algebras, nilpotence and related problems (see for instance \cite{Leibniz2,Leibniz4,le18,le16,le24,huel3}).

Leibniz triple systems were recently introduced by Bremner and S\'anchez-Ortega in the paper \cite{Bremner}, applying the Kolesnikov-Pozhidaev algorithm to Lie triple systems.
Furthermore, Leibniz triple systems are related to Leibniz algebras in the same way that Lie triple systems are related to Lie algebras. So it is natural to prove the analogue of results from the theory of Lie triple systems to Leibniz triple systems.

Recently, for instance in \cite{re1,m5,Ref,Yo_Arb_Alg}, the algebraic structures admitting a multiplicative basis have been studied
 and our work can be considered as a generalization of \cite{Arb_Leibniz_Super}.


\begin{definition}\rm \label{deffi}
A {\it Leibniz triple system} is a vector space $T$ endowed with a trilinear ope\-ration $\{\cdot, \cdot, \cdot\} : T \times T \times T \longrightarrow T$ satisfying

\begin{enumerate}
\item[1.] $\{a, \{b, c, d\}, f\} + \{a, \{c, b, d\}, f\} = 0,$

\item[2.]  $\{a, \{b, c, d\}, f\} + \{a, \{c, d, b\}, f\} + \{a, \{d, b, c\}, f\} = 0,$

\item[3.]
$\{a,b,\{c,d,f\}\} - \{\{a,b,c\},d,f\} + \{\{a,b,d\},c,f\} -
 \{\{a,b,f\},d,c\} +$\\
$ \{\{a,b,f\},c,d\} = 0,$

\item[4.]  $\{\{c,d,f\},b,a\} - \{\{c,d,f\},a,b\} - \{\{c,b,a\},d,f\} +
 \{\{c,a,b\},d,f\} -$ \\
 $ \{c,\{a,b,d\},f\} - \{c,d,\{a,b,f\}\} = 0,$
\end{enumerate}
or equivalently (see Lemma 8 in \cite{Bremner}) satisfying

{\small
\begin{enumerate}
\item[1.] $
\{a,\{b,c,d\},f\} = \{\{a,b,c\},d,f\} - \{\{a,c,b\},d,f\} - \{\{a,d,b\},c,f\} + \{\{a,d,c\},b,f\},$

\item[2.] $\{a,b,\{c,d,f\}\} = \{\{a,b,c\},d,f\} - \{\{a,b,d\},c,f\} - \{\{a,b,f\},c,d\} + \{\{a,b,f\},d,c\},$
\end{enumerate}}
for all $a,b,c,d,f \in T$.
\end{definition}

\noindent The most trivial examples of  Leibniz triple systems are  Lie triple systems with the same ternary product. If $L$ is a Leibniz algebra with product $[\cdot, \cdot]$ then $L$ becomes a Leibniz triple system considering $\{x,y,z\} := [[x, y], z]$, for $x,y,z \in L$. More examples refer to \cite{Bremner}. Throughout this paper, Leibniz triple systems $T$ are considered of arbitrary dimension and over an arbitrary base field $\mathbb{F}$.

A {\it subtriple} $S$ of a Leibniz triple system $T$ is a linear subspace such that $\{S,S,S\} \subset S$. A linear subspace $\mathcal{I}$ of $T$ is called an {\it ideal} if $\{\mathcal{I},T,T\} + \{T,\mathcal{I},T\} + \{T,T,\mathcal{I}\} \subset \mathcal{I}.$ We also recall that two nonzero ideals $\mathcal{I}, \mathcal{J}$ of $T$ are called {\it orthogonal} if the condition $$\{T,\mathcal{I},\mathcal{J}\} + \{\mathcal{I},T,\mathcal{J}\} +\{\mathcal{I},\mathcal{J},T\} + \{T,\mathcal{J},\mathcal{I} \} + \{\mathcal{J},T,\mathcal{I}\} + \{\mathcal{J},\mathcal{I},T \}=0$$ holds. A direct sum $\bigoplus_{j \in J}\mathcal{I}_j$ of non-zero  ideals of $T$ is called an {\it orthogonal direct sum} if $\mathcal{I}_j$ and $\mathcal{I}_k$ are orthogonal ideals for any $j, k \in J$ with $j \neq k$.

The ideal ${\frak I}$ generated by
\begin{equation}\label{gentle}
\{\{x,y,z\} - \{x,z,y\} + \{y,z,x\} : x, y, z \in T\}
 \end{equation}
 plays an important role in the theory since it determines the (possible) non-Lie character of $T$. From definition of ideal $\{{\frak I}, T, T\} \subset {\frak I}$ and from definition of Leibniz triple system it is straightforward to check that this ideal satisfies
\begin{equation}\label{equi}
\{T, {\frak I}, T\} = \{T, T, {\frak I}\} = 0.
\end{equation}

By the above observation, an adequate definition of simplicity in the case of Leibniz triple systems is the corresponding to the ones satisfying that its only ideals are $\{0\}, T$ and the one generated by the set $\{\{x,y,z\} - \{x,z,y\} + \{y,z,x\} : x, y, z \in T\}$. Following this vain, we consider the next definition.

\begin{definition}\label{Defsimple}\rm
A Leibniz triple system $T$ is said to be {\it simple} if $\{T,T,T\} \neq 0$ and its only ideals are $\{0\}, \frak I$ and $T$.
\end{definition}

\noindent Observe that this definition agrees with the definition of a simple Lie triple system, since in this case we have $\frak I = \{0\}$.

Therefore we can write $$T = {\frak I} \oplus {\neg \frak I},$$ where ${\neg \frak I}$ is the linear complement of ${\frak I}$ in $T$.
Hence, by taking ${\mathcal B}_{\frak I} = \{e_n\}_{n \in I}$ and ${\mathcal B}_{\neg \frak I} = \{u_r\}_{r \in J}$ bases of ${\frak I}$ and ${\neg \frak I}$ respectively, we get $${\mathcal B} = {\mathcal B}_{\frak I} \ud {\mathcal B}_{\neg \frak I}$$ is a basis of $T.$

\begin{definition}\label{11}\rm
A basis ${\mathcal B} = \{v_k\}_{k \in K}$ of $T$ is said to be {\it multiplicative} if for any $k_1,k_2,k_3 \in K$ we have either $\{v_{k_1},v_{k_2},v_{k_3}\} = 0$ or $0 \neq \{v_{k_1},v_{k_2},v_{k_3}\} \in \mathbb{F}v_k$ for some (unique) $k \in K$.
\end{definition}

\begin{remark}\rm
The definition of multiplicative basis given in Definition
\ref{11} is  a little bit more general than the usual one considered in the literature for the case of algebras (\cite{re1,m5,Ref}). In fact, in these references a basis ${\mathcal B}=\{v_k\}_{k \in K}$ of an algebra $A$ is called {\it multiplicative} if for any $k_1,k_2 \in K$ we have either $v_{k_1}v_{k_2}=0$ or $0 \neq v_{k_1}v_{k_2} = v_k$ for some $k \in K$.
\end{remark}

As examples of Leibniz triple systems admitting a multiplicative basis we have the Lie triple systems with multiplicative basis introduced in \cite{Yo_simple_triple, Yo_integrable, Yo_split_triple, Yo_Forero2}.  Jacobson presented in \cite[page 312]{Jacobson} the three isomorphisms classes of 2-dimensional Lie triple systems over an algebraically closed field of characteristic not two. By omitting the one of null products, we have two cases, where zero products are omitted and the basis is ${\mathcal B} = \{x,y\}$. These are: $$\{x,y,x\}=y, \hspace{0.3cm} \{y,x,x\}=-y$$ and $$\{x,y,x\}=2x, \hspace{0.3cm} \{y,x,x\}=-2x, \hspace{0.3cm} \{x,y,y\}=-2y, \hspace{0.3cm} \{y,x,y\}=2y.$$
Clearly, these are examples of Leibniz triple systems admitting  multiplicative bases. Moreover, all of the two-dimensional Leibniz triple systems presented in \cite[\S 8]{Bremner} also admit multiplicative bases.

Since any Leibniz algebra $(L,[\cdot, \cdot])$ with a multiplicative basis ${\mathcal B}$  gives rise to  a Leibniz triple system of triple product  $\{x,y,z\} := [[x, y], z]$ admitting also ${\mathcal B}$ as multiplicative basis, and many of the classes of Leibniz algebras which have been described in the literature, have been made by presenting a multiplication table of the Leibniz algebra in terms of a multiplicative basis, we get a lot of examples of Leibniz triple systems
admitting a multiplicative basis. This is the case for instance of the Leibniz triple systems associated to the two and three dimensional nilpotent Leibniz algebras (see \cite{Leibniz2,Loday}), to the  non-Lie Leibniz algebras $L$ with $L/{\mathcal S}$ abelian described in \cite{Leibniz2},  to the categories of finite-dimensional 0-filiform Leibniz algebras, of finite-dimensional nonsplit graded filiform Leibniz algebras, and of different types of finite-dimensional 2-filiform nonsplit Leibniz algebras (see \cite{huel2}). Also this is the case of the class of four-dimensional solvable Leibniz algebras with three-dimensional rigid nilradical (see \cite{le16}), to the families  of four-dimensional solvable Leibniz algebras with two-dimensional nilradical and of certain types, respect to its radical, of four-dimensional solvable Leibniz algebras  (see \cite{le18}), to several types of solvable Leibniz algebras with naturally graded filiform nilradical considered in \cite{le22}, to the solvable Leibniz algebras whose nilradical is $NF_n$ (see \cite{le24}), to the family of (complex)
finite-dimensional Leibniz algebras with Lie quotient  $sl_2$ (see \cite{huel3}), and so on.







\medskip

The present paper is devoted to the study of Leibniz triple systems admitting a multiplicative basis, given special attention to its structure.
The paper is organized as follows. In $\S2$, we develop  connections techniques in the index set $K$ of the multiplicative basis so as to get a powerful tool for the study of this class of triple systems. By making use of these techniques we show that any Leibniz triple system $T$ admitting a multiplicative basis is  the orthogonal direct sum  $T = \bigoplus_{\alpha}{\mathcal I}_{\alpha}$ with each ${\mathcal I}_{\alpha}$ a well described ideal of $T$ admitting also a multiplicative basis. In $\S3$ the minimality of $T$ is characterized in terms of the multiplicative basis and it is shown that, in case the basis is $\mu$-multiplicative, the above decomposition of $T$ is actually by means of the family of its minimal ideals.

\section{Connections in the index set. Decompositions}

In what follows $T = {\frak I} \oplus {\neg \frak I}$ denotes a Leibniz triple system admitting a multiplicative basis over a base field ${\mathbb F}.$ We consider the multiplicative basis ${\mathcal B} = {\mathcal B}_{\frak I} \ud {\mathcal B}_{\neg \frak I}$ where ${\mathcal B}_{\frak I} = \{e_n\}_{n \in I}$ and ${\mathcal B}_{\neg \frak I}= \{u_r\}_{r \in J}.$ We begin this section by developing connection techniques among the elements in the index sets $I$ and $J$ as the main tool in our study.

By renaming if necessary we can suppose $I \cap J = \emptyset$. Now, for every $k \in {I \ud J}$, a new assistant variable $\bar{k} \notin {I \ud J}$ is introduced and we denote by $$\hbox{ $\bar{I}: = \{\bar{n}: n \in I\}$ and $\bar{J}: = \{\bar{r}: r \in J\}$}$$ the sets consisting of all these new symbols. Also, given every $\bar{k} \in \bar{I} \ud \bar{J}$ we will denote $$\overline{(\overline{k})} := k.$$
Finally, we will write by ${\mathcal P}(A)$ the power set of a given set $A$.

Next we introduce the following mappings which recover, in some sense, certain multiplicative relations among the elements of ${\mathcal B}:$

$$\hbox{$a : (I\;\dot\cup\;J) \times (I\;\dot\cup\;J) \times (I\;\dot\cup\;J) \to \mathcal{P}(I\;\dot\cup\;J)$}$$ defined by

\begin{itemize}
\item For $n \in I, r_1,r_2 \in J$,
$$a(n,r_1,r_2) := \{m\} \hspace{0.4cm} \hbox{if} \hspace{0.2cm}  0 \neq \{e_n, u_{r_1}, u_{r_2}\} \in \mathbb{F}e_m \hspace{0.2cm} \hbox{with} \hspace{0.2cm} m \in I$$


\item For $r_1,r_2,r_3 \in J,$
$$\hbox{$a(r_1,r_2,r_3):=\left\{%
\begin{array}{ll}
\{n\} & \hbox{if $0 \neq \{u_{r_1}, u_{r_2}, u_{r_3}\} \in \mathbb{F}e_n$ with $n \in I$}\\
\{s\} & \hbox{if $0 \neq \{u_{r_1}, u_{r_2}, u_{r_3}\} \in \mathbb{F}u_s$ with $s \in J$}\\
\end{array}%
\right. $}$$
\end{itemize}

\noindent and empty set in the remaining cases. We also consider $$\hbox{$b : (I\;\dot\cup\;J) \times (\overline{I}\;\dot\cup\;\overline{J}) \times (\overline{I}\;\dot\cup\;\overline{J}) \to \mathcal{P}(I\;\dot\cup\;J)$}$$ given as

\begin{itemize}
\item For $n \in I$ and $\overline{r}_1, \overline{r}_2 \in \overline{J},$
$$b(n,\overline{r}_1,\overline{r}_2):= \{m \in I : 0 \neq \{e_m, u_{r_1}, u_{r_2}\} \in \mathbb{F}e_n\} \cup \{s \in J : 0 \neq \{u_s, u_{r_1}, u_{r_2}\} \in \mathbb{F}e_n\}$$ $$\cup \{s \in J : 0 \neq \{u_{r_1},u_s, u_{r_2}\} \in \mathbb{F}e_n \} \cup \{s \in J : 0 \neq \{u_{r_1}, u_{r_2},u_s\} \in \mathbb{F}e_n\}$$

\item For $r \in J$ and $\overline{r}_1, \overline{r}_2 \in \overline{J},$
$$b(r,\overline{r}_1,\overline{r}_2):= \{s \in J : 0 \neq \{u_s, u_{r_1}, u_{r_2}\} \in \mathbb{F}u_r\} \cup \{s \in J : 0 \neq \{u_{r_1}, u_s, u_{r_2}\} \in \mathbb{F}u_r\}$$ $$\cup \{s \in J : 0 \neq \{u_{r_1}, u_{r_2}, u_s\} \in \mathbb{F}u_r\}$$
\end{itemize}

\noindent and empty set in the remaining cases.

Now we are in condition to introduce the map $$\mu : (I \ud J) \times (I \ud J \ud \overline{I} \ud \overline{J}) \times (I \ud J \ud \overline{I} \ud \overline{J}) \to \mathcal{P}(I \ud J)$$ given by

\medskip

\begin{itemize}
\item $\mu(k_1,k_2,k_3):= \displaystyle\cup_{\sigma \in S_3} a(k_{\sigma(1)},k_{\sigma(2)},k_{\sigma(3)})$ for $k_1,k_2,k_3 \in I \ud J,$
\item $\mu(k,\overline{k}_1,\overline{k}_2) := \displaystyle\cup_{\sigma \in S_2} b(k,\overline{k}_{\sigma(1)},\overline{k}_{\sigma(2)})$ for $k \in I \ud J$ and $\overline{k}_1, \overline{k}_2 \in \overline{J},$
\end{itemize}
\noindent being $S_n$ the set of permutations of $n$ elements, and empty set in the remaining cases.

\begin{remark}\label{Remark}
From the definition of $\mu$ we have same trivial simmetries:
\begin{itemize}
\item $\mu(k_1,k_2,k_3)= \mu(k_{\sigma(1)},k_{\sigma(2)},k_{\sigma(3)})$ for $k \in I \ud J,$
\item $\mu(k, \overline{r}_1, \overline{r}_2) = \mu(k,\overline{r}_2, \overline{r}_1),$ with $k \in I \ud J$ and $\overline{r}_1, \overline{r}_2 \in \overline{J}$.
\end{itemize}
\end{remark}

\begin{lemma}\label{lema1}
Let $k_1,k_2 \in I \ud J$ and $h_1,h_2 \in J \ud \overline{J}$. Then, $k_1 \in \mu(k_2,h_1,h_2)$ if and only if $k_2 \in \mu(k_1,\overline{h}_1,\overline{h}_2)$.
\end{lemma}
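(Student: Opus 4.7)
The plan is a direct case analysis on the locations of $h_1$ and $h_2$, unwinding the auxiliary maps $a$ and $b$. The guiding observation is that $b$ is the positional inverse of $a$: by construction, $k' \in b(k, \overline{r}_1, \overline{r}_2)$ if and only if some triple product of $v_{k'}, u_{r_1}$ and $u_{r_2}$, with $v_{k'}$ placed in one of the admissible slots, yields a nonzero scalar multiple of $v_k$. Putting bars on a pair of arguments of $\mu$ therefore swaps the roles of \emph{input} and \emph{output} of the remaining basis index, which is exactly what the lemma encodes.

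First I would dispose of the mixed case: if exactly one of $h_1, h_2$ lies in $\overline{J}$, then neither $(k_2, h_1, h_2)$ nor $(k_1, \overline{h}_1, \overline{h}_2)$ fits either branch in the definition of $\mu$, so both memberships are vacuously false and the biconditional holds. The case $h_1, h_2 \in \overline{J}$ is equivalent, via $\overline{\overline{h}_i} = h_i$, to the case $h'_1, h'_2 \in J$ with the roles of $k_1$ and $k_2$ interchanged, so only the case $h_1, h_2 \in J$ really needs to be treated.

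For $h_1, h_2 \in J$ I would subdivide by the type of $k_2$. When $k_2 \in I$, the identities $\{T, {\frak I}, T\} = \{T, T, {\frak I}\} = 0$ from (\ref{equi}) together with $\{{\frak I}, T, T\} \subset {\frak I}$ force the only contributing $a$-permutations on the left to keep $k_2$ in the first slot and force $k_1 \in I$; both memberships then collapse to the condition that $\{e_{k_2}, u_{h_1}, u_{h_2}\} \in {\mathbb F} e_{k_1}$ or the same expression with $h_1$ and $h_2$ swapped, which is precisely the $I$-branch of $b(k_1, \overline{h}_1, \overline{h}_2) \cup b(k_1, \overline{h}_2, \overline{h}_1)$ tested against the index $k_2$. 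When $k_2 \in J$, both sides range over all six permutations of $(u_{k_2}, u_{h_1}, u_{h_2})$: the left via the $S_3$-union in the $a$-branch of $\mu$, and the right via the three positional sub-conditions defining $b$ combined with the $S_2$-swap of $\overline{h}_1$ and $\overline{h}_2$.

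The main obstacle is purely notational: the $b$-map contains four sub-conditions in the $I$-case and three in the $J$-case, and the proof demands a one-to-one match between each permutation appearing in the $S_3$-union on the left and a specific pair consisting of (swap-of-$\overline{h}$'s, positional branch of $b$) on the right. Once a small lookup table of these correspondences is laid out, the equivalence follows by direct inspection.
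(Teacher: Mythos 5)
Your proof is correct and follows essentially the same route as the paper's: dismiss the mixed cases where $\mu$ is empty by definition, reduce the case $h_1,h_2\in\overline{J}$ to the case $h_1,h_2\in J$ via the involution $\overline{(\overline{\cdot})}$, and then unwind the definitions of $a$ and $b$ to match permutations on the two sides. If anything, your explicit case split on $k_2\in I$ versus $k_2\in J$ is more careful than the paper's one-line identification $\mu(k_2,h_1,h_2)=a(k_2,h_1,h_2)\cup a(k_2,h_2,h_1)$, which as written is only literally accurate when $k_2\in I$.
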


\begin{proof}
By definition we have $\mu(I \ud J,J,\overline{J}) = \mu(I \ud J,\overline{J},J) =\emptyset$, then we just have two cases to consider. Let $k_1 \in \mu(k_2,h_1,h_2)$ and suppose $h_1,h_2 \in J$, meaning that, $k_1 \in \mu(k_2,h_1,h_2) = a(k_2,h_1,h_2) \cup a(k_2,h_2,h_1),$ and this is equivalent to $\{k_2\} \in b(k_1,\overline{h}_1,\overline{h}_2) \cup b(k_1,\overline{h}_2,\overline{h}_1).$ In any case $k_2 \in \mu(k_1,\overline{h}_1,\overline{h}_2)$ as wished. The case $h_1,h_2 \in \overline{J}$ is proved in a similar way.
\end{proof}

\medskip

Finally, we define the mapping $$\phi: \mathcal{P}(I \ud J) \times (I \ud J \ud \overline{I} \ud \overline{J}) \times (I \ud J \ud \overline{I} \ud \overline{J}) \to \mathcal{P}(I \ud J)$$ as $$\phi(K,p,q):=\bigcup_{k \in K} \mu(k,p,q)$$ for any $K \subset \mathcal{P}(I \ud J)$ and $p,q \in I \ud J \ud \overline{I} \ud \overline{J}$.

\begin{lemma}\label{lema2}
Consider $K \subset \mathcal{P}(I \ud J)$ and $p,q \in J \ud \overline{J}$. Then, $x \in \phi(K,p,q)$ if and only if $\phi(\{x\},\overline{p},\overline{q}) \cap K \neq \emptyset$.
\end{lemma}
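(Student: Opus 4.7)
The plan is to prove the equivalence by simply unfolding the definition of $\phi$ on both sides and then invoking Lemma \ref{lema1}, which is the ``singleton'' version of exactly the symmetry we are asserting here.

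First, by the definition $\phi(K,p,q)=\bigcup_{k\in K}\mu(k,p,q)$, the statement $x\in\phi(K,p,q)$ is equivalent to the existence of some $k\in K$ with $x\in\mu(k,p,q)$. Since by hypothesis $p,q\in J\ud\overline{J}$, and since $K\subset I\ud J$ (so $k\in I\ud J$) while $x\in\phi(K,p,q)\subset I\ud J$, we are exactly in the setting where Lemma \ref{lema1} applies with the roles $k_1=x$, $k_2=k$, $h_1=p$, $h_2=q$. Hence $x\in\mu(k,p,q)$ if and only if $k\in\mu(x,\overline{p},\overline{q})$.

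Substituting this equivalence back, $x\in\phi(K,p,q)$ holds if and only if there exists some $k\in K$ with $k\in\mu(x,\overline{p},\overline{q})$. Unfolding the definition of $\phi$ one more time gives $\mu(x,\overline{p},\overline{q})=\phi(\{x\},\overline{p},\overline{q})$, so the latter condition is exactly $\phi(\{x\},\overline{p},\overline{q})\cap K\neq\emptyset$, finishing the argument.

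There is essentially no obstacle: the lemma is a direct ``set-valued'' lifting of Lemma \ref{lema1}. The only point worth being careful about is that the hypothesis $p,q\in J\ud\overline{J}$ is needed in order to be in the nonempty regime of $\mu$ and to legitimately apply Lemma \ref{lema1}; outside this regime both $\mu(k,p,q)$ and $\mu(x,\overline{p},\overline{q})$ would vanish and the statement becomes the trivial equivalence $\emptyset=\emptyset$.
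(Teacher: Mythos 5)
Your proof is correct and follows essentially the same route as the paper's: unfold $\phi$ into an existential over $K$, apply Lemma \ref{lema1} to swap $x$ and $k$ at the cost of barring $p,q$, and refold into the intersection condition. Nothing to add.
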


\begin{proof}
We have $x \in \phi(K,p,q)$ if and only if there exists $k\in K$ such that $x \in \mu(k,p,q)$. By Lemma \ref{lema1}, this is equivalent to $k \in \mu(x,\overline{p},\overline{q}) = \phi(\{x\},\overline{p},\overline{q})$ and so also equivalent to $k \in \phi(\{x\},\overline{p},\overline{q}) \cap K \neq \emptyset.$
\end{proof}

\begin{definition}\label{connection}\rm
Let $k, k'$ be elements in $I \ud J$. We say that $k$ is {\it connected} to $k'$ if either $k = k'$ or there exists a subset $\{k_1,k_2,k_3,\dots,k_{2n+1}\} \subset I \ud J \ud \overline{I} \ud \overline{J}$ for some $n \geq 1$, such that the following conditions hold:
\begin{enumerate}
\item[{\rm 1.}] $k_1=k$.
\medskip
\item [{\rm 2.}] $\phi(\{k_1\},k_2,k_3) \neq \emptyset$,\\
$\phi(\phi(\{k_1\},k_2,k_3),k_4,k_5) \neq \emptyset$,\\
\hspace*{2cm} $\; \vdots $\\
$\phi(\phi(\cdots\phi(\{k_1\},k_2,k_3) \cdots), k_{2n-2}, k_{2n-1}) \neq \emptyset$.
\medskip
\item [{\rm 3.}]
$k' \in \phi(\phi(\cdots\phi(\{k_1\},k_2,k_3)\cdots), k_{2n}, k_{2n+1}).$
\end{enumerate}
The subset $\{k_1,k_2,k_3,\dots,k_{2n+1}\}$ is called a {\it connection} from $k$ to $k'$.
\end{definition}

\begin{remark}\label{Remark2}
Observe that Equation (\ref{equi}) and expressions of the applications $a$ and $b$ imply that the connection $\{k_2,k_3,\dots,k_{2n+1}\}$ must be included in $J \ud \overline{J}$.
\end{remark}

Our next goal is to show that the connection relation is an equivalence relation. Previously we show the next result.

\begin{lemma}\label{lema3}
Let $k,k' \in I \ud J$ with $k \neq k'$. If $\{k_1,k_2,k_3, \dots, k_{2n+1}\}$ is a connection from $k$ to $k'$, with $n \geq 1,$ then $\{k',\overline{k}_{2n+1},\overline{k}_{2n},\dots,\overline{k}_3,\overline{k}_2\}$ is a connection from $k'$ to $k$.
\end{lemma}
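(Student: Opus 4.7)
The plan is to iterate Lemma \ref{lema2} along the chain witnessing the connection from $k$ to $k'$, using the symmetry of $\mu$ in its last two arguments recorded in Remark \ref{Remark} to reorder the barred entries when necessary. For bookkeeping, set $K_0 := \{k_1\} = \{k\}$ and $K_i := \phi(K_{i-1}, k_{2i}, k_{2i+1})$ for $i=1,\dots,n$, so that the hypothesis reads $K_i\neq\emptyset$ for every $i$ and $k' \in K_n$. Correspondingly set $K'_0 := \{k'\}$ and $K'_i := \phi(K'_{i-1}, \overline{k}_{2(n-i)+3}, \overline{k}_{2(n-i)+2})$ for $i=1,\dots,n$, so that proving the reversed sequence $\{k', \overline{k}_{2n+1}, \overline{k}_{2n}, \dots, \overline{k}_3, \overline{k}_2\}$ to be a connection from $k'$ to $k$ amounts to showing that each $K'_i$ is nonempty and that $k \in K'_n$.

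The heart of the argument is the inductive claim that $K_{n-i} \cap K'_i \neq \emptyset$ for every $i = 0, 1, \dots, n$. The base case $i=0$ follows from $k' \in K_n \cap \{k'\}$. For the inductive step, pick $x \in K_{n-i} \cap K'_i$. Since $x \in K_{n-i} = \phi(K_{n-i-1}, k_{2(n-i)}, k_{2(n-i)+1})$ and, by Remark \ref{Remark2}, both $k_{2(n-i)}$ and $k_{2(n-i)+1}$ belong to $J \ud \overline{J}$, Lemma \ref{lema2} produces an element in $\phi(\{x\}, \overline{k}_{2(n-i)}, \overline{k}_{2(n-i)+1}) \cap K_{n-i-1}$. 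Remark \ref{Remark} lets us swap the two barred arguments of $\mu$, so this set equals $\phi(\{x\}, \overline{k}_{2(n-i)+1}, \overline{k}_{2(n-i)}) \cap K_{n-i-1}$; since $x \in K'_i$ forces $\phi(\{x\}, \overline{k}_{2(n-i)+1}, \overline{k}_{2(n-i)}) \subseteq \phi(K'_i, \overline{k}_{2(n-i)+1}, \overline{k}_{2(n-i)}) = K'_{i+1}$, we conclude $K_{n-i-1} \cap K'_{i+1} \neq \emptyset$.

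From the claim, $K'_i \supseteq K_{n-i} \cap K'_i \neq \emptyset$ for every $i$, so condition 2 of Definition \ref{connection} is met for the reversed sequence. Specializing $i = n$ gives $\{k\} \cap K'_n = K_0 \cap K'_n \neq \emptyset$, i.e.\ $k \in K'_n$, which is condition 3. The fact that all $\overline{k}_j$ appearing in the reversed chain lie in $J \ud \overline{J}$ is immediate from Remark \ref{Remark2} applied to the original chain, so the new sequence conforms to the format prescribed in Definition \ref{connection}.

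The main obstacle I anticipate is purely the index bookkeeping: the reversed partial set $K'_{i+1}$ consumes the pair $(\overline{k}_{2(n-i)+1}, \overline{k}_{2(n-i)})$ at the step $i \to i+1$, whereas Lemma \ref{lema2} natively delivers the opposite ordering $(\overline{k}_{2(n-i)}, \overline{k}_{2(n-i)+1})$. Matching the two requires verifying the arithmetic identities $2(n-(i+1))+3 = 2(n-i)+1$ and $2(n-(i+1))+2 = 2(n-i)$ and invoking Remark \ref{Remark} at each step to align the orderings; once these are in place, the induction runs essentially automatically.
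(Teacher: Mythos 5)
Your proof is correct and follows essentially the same route as the paper's: both peel off the last pair of the connection via Lemma \ref{lema2}, swap the barred arguments with Remark \ref{Remark}, and recurse, the only difference being that you phrase the induction as the invariant $K_{n-i}\cap K'_i\neq\emptyset$ rather than as an induction on the length of the connection. The index bookkeeping you flag checks out, so no gap remains.
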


\begin{proof}
Let us prove it  by induction on $n$.

For $n=1$ we have that $k_1=k$ and $k' \in \phi(\{k\},k_2,k_3)$. Hence $k' \in \mu(k,k_2,k_3)$ and by Lemma \ref{lema1} and Remark \ref{Remark}, $k \in \mu(k',\overline{k}_2,\overline{k}_3) = \mu(k',\overline{k}_3,\overline{k}_2) = \phi(\{k'\},\overline{k}_3,\overline{k}_2)$. From here $\{k',\overline{k}_3,\overline{k}_2\}$ is a connection from $k'$ to $k$.

Let us suppose that the assertion holds for any connection with $2m+1$ elements and let us show it also holds for any connection $\{k_1,k_2,\dots,k_{2m+1},k_{2m+2},k_{2m+3}\}$ with $2m+3$ elements. If we denote $U:=\phi(\phi(\dots\phi(\{k_1\},k_2,k_3)\dots),k_{2m},k_{2m+1})$ then we have that $$k' \in\phi(U,k_{2m+2},k_{2m+3}).$$ From here, Lemma \ref{lema2} allows us to assert $\phi(\{k'\},\overline{k}_{2m+2},\overline{k}_{2m+3})\cap U \neq\emptyset$ and so we can take $u \in U$ such that by Remark \ref{Remark}
\begin{equation}\label{eqq1}
u \in \phi(\{k'\},\overline{k}_{2m+2},\overline{k}_{2m+3}) = \phi(\{k'\},\overline{k}_{2m+3},\overline{k}_{2m+2}).
\end{equation}
Since $u \in U$ we have that $\{k_1, k_2, \dots, k_{2m}, k_{2m+1}\}$ is a connection from $k$ to $u$ and so, by induction hypothesis, the set $\{u,\overline{k}_{2m+1},\overline{k}_{2m},\dots,\overline{k}_3,\overline{k}_2\}$ is a connection from $u$ to $k$. This fact together with   Equation (\ref{eqq1}) give us that $$k \in\phi(\phi(\dots\phi(\phi(\{k'\},\overline{k}_{2m+3},\overline{k}_{2m+2}),\overline{k}_{2m+1},\overline{k}_{2m})\dots),\overline{k}_3,\overline{k}_2).$$
Hence $\{k',\overline{k}_{2m+3},\overline{k}_{2m+2},\dots,\overline{k}_3,\overline{k}_2\}$ is a connection from $k'$ to $k$ and the proof is complete.
\end{proof}

\begin{proposition}
The relation $\sim$ in $I \ud J$, defined by $k \sim k'$ if and only if $k$ is connected to $k'$, is an equivalence relation.
\end{proposition}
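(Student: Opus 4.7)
The plan is to verify the three defining properties of an equivalence relation, noting that reflexivity and symmetry are essentially free given the setup, so that transitivity is the only substantive matter.

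Reflexivity is immediate from the very first clause of Definition \ref{connection}: every $k \in I \ud J$ satisfies $k \sim k$ by the ``$k = k'$'' alternative. For symmetry, if $k = k'$ there is nothing to prove; if $k \ne k'$ and $\{k_1, k_2, \ldots, k_{2n+1}\}$ is a connection from $k$ to $k'$, then Lemma \ref{lema3} produces the connection $\{k', \overline{k}_{2n+1}, \overline{k}_{2n}, \ldots, \overline{k}_3, \overline{k}_2\}$ from $k'$ to $k$, which is exactly what is required.

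For transitivity, suppose $k \sim k'$ and $k' \sim k''$. If any two of $k, k', k''$ coincide the conclusion follows trivially from the other hypothesis together with reflexivity, so assume the three are pairwise distinct. Let $\{k_1, k_2, \ldots, k_{2n+1}\}$ be a connection from $k$ to $k'$ and $\{k'_1, k'_2, \ldots, k'_{2m+1}\}$ be a connection from $k'$ to $k''$, with $n, m \ge 1$ and $k_1 = k$, $k'_1 = k'$. The idea is to concatenate: I would propose
\[
\{k_1, k_2, k_3, \ldots, k_{2n}, k_{2n+1}, k'_2, k'_3, \ldots, k'_{2m}, k'_{2m+1}\}
\]
as a connection from $k$ to $k''$. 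Denote $U := \phi(\phi(\cdots \phi(\{k_1\}, k_2, k_3) \cdots), k_{2n}, k_{2n+1})$. By the first connection $k' \in U$, hence $\{k'\} \subset U$.

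The key technical point is the evident monotonicity of $\phi$ in its first argument: if $K \subset K'$ then $\phi(K, p, q) \subset \phi(K', p, q)$, which is immediate from the definition $\phi(K, p, q) = \bigcup_{k \in K} \mu(k, p, q)$. Applying this step by step to the second connection, starting from $\{k'\} \subset U$, one obtains
\[
\emptyset \ne \phi(\{k'\}, k'_2, k'_3) \subset \phi(U, k'_2, k'_3),
\]
and iterating,
\[
\phi(\phi(\cdots \phi(\{k'\}, k'_2, k'_3) \cdots), k'_{2j}, k'_{2j+1}) \subset \phi(\phi(\cdots \phi(U, k'_2, k'_3) \cdots), k'_{2j}, k'_{2j+1})
\]
for every $j = 1, \ldots, m$. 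The left-hand sides are nonempty at every intermediate level by hypothesis, forcing the right-hand sides to be nonempty as well, which verifies condition (2) of Definition \ref{connection} for the concatenated sequence. At $j = m$ the inclusion places $k''$ in $\phi(\phi(\cdots \phi(U, k'_2, k'_3) \cdots), k'_{2m}, k'_{2m+1})$, i.e.\ condition (3). Thus the concatenated sequence is a connection from $k$ to $k''$ and $k \sim k''$.

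The main (and only real) obstacle is the transitivity argument, and within it the verification that monotonicity of $\phi$ in its set argument lets the second connection be ``grown from the larger seed $U$'' rather than from $\{k'\}$. Once this is in place the rest is bookkeeping on the edge cases where $k, k', k''$ are not pairwise distinct.
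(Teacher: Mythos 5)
Your proof is correct and follows the same route as the paper: reflexivity from the definition, symmetry from Lemma \ref{lema3}, and transitivity by concatenating the two connections into $\{k_1,\dots,k_{2n+1},k'_2,\dots,k'_{2m+1}\}$. The paper simply asserts that this concatenation is a connection, whereas you supply the (correct) justification via monotonicity of $\phi$ in its set argument; this is a welcome extra detail, not a different method.
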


\begin{proof}
The reflexive and symmetric character of $\sim$ are given by Definition \ref{connection} an Lemma \ref{lema3} respectively.
To check the transitivity character, observe that if we consider a couple of  connections $\{k_1,\dots,k_{2n+1}\}$ and $\{{k'}_1,\dots,{k'}_{2m+1}\}$ from $k$ to $k'$ and from $k'$ to $k''$ respectively, then the set $\{k_1,\dots,k_{2n+1},{k'}_2,\dots,{k'}_{2m+1}\}$ is a connection from $k$ to $k''$.
\end{proof}

By the above proposition we can introduce the quotient set
$$(I \ud J)/ \sim := \{[k] : k \in I \ud J\},$$ becoming $[k]$ the set of elements in $I \ud J$ which are connected to $k$. Our next purpose is to construct an adequate ideal in $T$ associated to each $[k] \in (I \ud J)/\sim$. We define the linear subspace $$T_{[k]} := \Bigl(\bigoplus\limits_{n \in [k]\cap I} \mathbb{F}e_n \Bigr) \oplus \Bigl(\bigoplus\limits_{r \in [k]\cap J} \mathbb{F}u_r \Bigr).$$

\smallskip

\begin{lemma}\label{lema4}
If $\{T_{[k]}, T, T_{[h]}\} + \{T,T_{[k]}, T_{[h]}\} \neq 0$ for some $[k],[h] \in (I \ud J)/\sim$, then $[k]=[h]$ and moreover $\{T_{[k]}, T, T_{[h]}\} + \{T,T_{[k]}, T_{[h]}\} \subset T_{[k]}.$
\end{lemma}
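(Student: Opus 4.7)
The plan is to produce a single nonzero basis triple product witnessing the hypothesis, compute the index of its image, and use the symmetry of $\mu$ to connect that index simultaneously to all three input indices; this forces the relevant classes to coincide, and the inclusion then follows by linearity.

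First I would unfold the hypothesis. Since $\{T_{[k]}, T, T_{[h]}\} + \{T,T_{[k]}, T_{[h]}\} \neq 0$, by linearity there exist basis elements $v_i, v_j, v_l \in \mathcal{B}$ with $\{v_i, v_j, v_l\} \neq 0$ and either \emph{(A)} $i \in [k]$ and $l \in [h]$, or \emph{(B)} $j \in [k]$ and $l \in [h]$. Multiplicativity of $\mathcal{B}$ yields $\{v_i, v_j, v_l\} = \lambda v_m$ for a unique $\lambda \in \mathbb{F}\setminus\{0\}$ and $m \in I \ud J$. Equation (\ref{equi})---i.e., $\{T, \frak I, T\} = \{T, T, \frak I\} = 0$---forces $j, l \in J$, since an $\frak I$-factor in position two or three would annihilate the product.

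Next I would translate the surviving product into connections. By the definition of the map $a$, we have $m \in a(i,j,l) \subset \mu(i,j,l) = \phi(\{i\},j,l)$. The full $S_3$-symmetry of $\mu$ on $(I \ud J)^3$-entries recorded in Remark \ref{Remark} then gives $m \in \mu(j,i,l) = \phi(\{j\},i,l)$ and $m \in \mu(l,j,i) = \phi(\{l\},j,i)$ as well. Each of these non-vanishings is a length-one connection in the sense of Definition \ref{connection}, hence $m \sim i$, $m \sim j$, and $m \sim l$. In Case (A), $[k] = [i] = [m] = [l] = [h]$; in Case (B), $[k] = [j] = [m] = [l] = [h]$. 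In either case $[k] = [h]$, and $\lambda v_m \in T_{[m]} = T_{[k]}$.

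Finally, to upgrade membership of the chosen product into the inclusion $\{T_{[k]}, T, T_{[h]}\} + \{T,T_{[k]}, T_{[h]}\} \subset T_{[k]}$, I would re-run the above argument on every basis triple contributing to the sum: each either vanishes or lies in $\mathbb{F} v_{m'}$ for some $m' \in [k]$, so linearity closes the proof. The only delicate point is that the connection $\{l,j,i\}$ from $l$ to $m$ in Case (A) must be allowed even when $i \in I$ rather than $J$; this is legitimate because Definition \ref{connection} asks only for $\phi(\{l\},j,i) \neq \emptyset$, and the $S_3$-symmetry of $\mu$ places $m$ in this set regardless of whether $i$ lies in $I$ or $J$.
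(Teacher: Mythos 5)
Your proof is correct and follows essentially the same route as the paper's: reduce to a single nonzero basis product, use Equation (\ref{equi}) to place the second and third factors in $J$, and exploit the full $S_3$-symmetry of $\mu$ to connect the output index to each input index, which identifies the classes and yields the inclusion. Your final remark about allowing an $I$-index inside a connection is well taken and is exactly how the paper itself argues (e.g.\ via $\mu(n_0,p,s_0)=\mu(s_0,p,n_0)$), notwithstanding the looser phrasing of Remark \ref{Remark2}.
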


\begin{proof}
Suppose $\{T_{[k]}, T, T_{[h]}\} \neq 0,$ then we have three possibilities:
\begin{itemize}
\item There exist $n_0 \in [k] \cap I, p \in J$ and $s_0 \in [h] \cap J$ such that $0 \neq \{e_{n_0},u_p,u_{s_0}\} \in \mathbb{F}e_n$ for some $n \in I.$ Then $n \in \mu(n_0,p,s_0) = \phi(\{n_0\},p,s_0)$ and $\{n_0,p,s_0\}$ is a connection from $n_0$ to $n$. From here $k \sim n_0 \sim n$. Observe that by Remark \ref{Remark} $\mu(n_0,p,s_0) = \mu(s_0,p,n_0),$ therefore $s_0 \sim n$. From $h \sim s_0 \sim n$ we conclude $[h] = [k]$.

\item There exist $r_0 \in [k] \cap J, s_0 \in [h] \cap J$ and $p \in J$ such that $0 \neq \{u_{r_0},u_{s_0},u_p\} \in \mathbb{F}e_n$ for some $n \in I,$ or $0 \neq \{u_{r_0},u_{s_0},u_p\} \in \mathbb{F}u_r$ for some $r \in J.$

\begin{itemize}
\item In first case by Remark \ref{Remark} we have that $n \in \mu(r_0,s_0,p) = \mu(s_0,r_0,p)$. Hence $n \in \phi(\{r_0\},s_0,p)$ and $n \in \phi(\{s_0\},r_0,p)$. Considering the connections $\{r_0,s_0,p\}$ and $\{s_0,r_0,p\}$ we have respectively that $n \sim r_0$ and $n \sim s_0,$ then $k \sim r_0 \sim n \sim s_0 \sim h.$

\item Second case is analogous, being $r \in \mu(r_0,s_0,p) = \mu(s_0,r_0,p)$ by Remark \ref{Remark}, and then $r \in \phi(\{r_0\},s_0,p)$ and $r \in \phi(\{s_0\},r_0,p)$. Using the connections $\{r_0,s_0,p\}$ and $\{s_0,r_0,p\}$ we get $r \sim r_0$ and $r \sim s_0$ respectively, and again $k \sim r_0 \sim r \sim s_0 \sim h.$
\end{itemize}
\end{itemize}
Similarly we can proceed with $\{T,T_{[k]},T_{[h]}\} \neq 0$ and we obtain the result.
\end{proof}

\begin{definition}\rm
Let $T = {\frak I} \oplus {\neg \frak I}$ be a Leibniz triple system with multiplicative basis ${\mathcal B}.$ It is said that a subtriple $S$ of $T$ admits a multiplicative basis $\mathcal{B}_S$ {\it inherited} from $\mathcal{B}$ if $\mathcal{B}_S$ is a multiplicative basis of $S$ satisfying $\mathcal{B}_S \subset \mathcal{B}$.
\end{definition}

\begin{proposition}\label{abril}
For any $[k] \in (I \ud J)/\sim$, the linear subspace $T_{[k]}$ is an ideal of $T$ admitting a multiplicative basis inherited from the one of $T$.
\end{proposition}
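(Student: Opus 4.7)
The plan is to reduce the ideal property of $T_{[k]}$ to two applications of Lemma \ref{lema4}, and then to read off the inherited multiplicative basis once this is done. The candidate basis is
$$\mathcal{B}_{T_{[k]}} := \{e_n : n \in [k] \cap I\} \cup \{u_r : r \in [k] \cap J\},$$
which is tautologically a basis of $T_{[k]}$ sitting inside $\mathcal{B}$; its multiplicativity will be immediate once $T_{[k]}$ is known to be a subtriple.

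For the ideal property I would decompose $T = \bigoplus_{[h] \in (I \ud J)/\sim} T_{[h]}$ and expand each of the three required inclusions
$$\{T_{[k]}, T, T\} + \{T, T_{[k]}, T\} + \{T, T, T_{[k]}\} \subset T_{[k]}$$
into sums of terms $\{T_{[h_1]}, T_{[h_2]}, T_{[h_3]}\}$ in which exactly one specified index equals $[k]$. In each such term I would apply the contrapositive of Lemma \ref{lema4} twice, to the two position-pairs $(1,3)$ and $(2,3)$. For instance, in $\{T_{[k]}, T_{[h]}, T_{[h']}\}$ the inclusion $\{T_{[k]}, T_{[h]}, T_{[h']}\} \subset \{T_{[k]}, T, T_{[h']}\}$ forces $[h']=[k]$ (otherwise the term vanishes), and $\{T_{[k]}, T_{[h]}, T_{[h']}\} \subset \{T, T_{[h]}, T_{[h']}\}$ then forces $[h]=[h']$; the other two inclusions are handled by the same two clauses of Lemma \ref{lema4} applied with $T_{[k]}$ in position $2$ or $3$. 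In all three cases only the diagonal summand $\{T_{[k]}, T_{[k]}, T_{[k]}\}$ can be nonzero, and the ``moreover'' clause of Lemma \ref{lema4} places this diagonal summand inside $T_{[k]}$.

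With $T_{[k]}$ shown to be an ideal, and in particular a subtriple, the multiplicative basis property for $\mathcal{B}_{T_{[k]}}$ is automatic: for $v_1, v_2, v_3 \in \mathcal{B}_{T_{[k]}} \subset \mathcal{B}$ the product $\{v_1,v_2,v_3\}$ equals $0$ or $\alpha w$ with $0 \neq \alpha \in \mathbb{F}$ and $w \in \mathcal{B}$ by multiplicativity of $\mathcal{B}$; since $\{v_1,v_2,v_3\} \in T_{[k]}$ and $\mathcal{B}$ is linearly independent, such a $w$ must already lie in $\mathcal{B}_{T_{[k]}}$. The only real obstacle is a bookkeeping one, namely confirming that the clauses of Lemma \ref{lema4} covering the position pairs $(1,3)$ and $(2,3)$ suffice to annihilate every off-diagonal summand even when the distinguished $T_{[k]}$-factor occupies position $1$; the three case analyses above show that no separate ``positions $(1,2)$'' version of Lemma \ref{lema4} is required, so the argument closes uniformly.
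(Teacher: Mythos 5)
Your proof is correct and follows essentially the same route as the paper: decompose $T=\bigoplus_{[h]}T_{[h]}$ and reduce everything to Lemma \ref{lema4}, which kills all off-diagonal summands and places the diagonal one inside $T_{[k]}$, after which the inherited basis $\{e_n : n\in[k]\cap I\}\,\dot\cup\,\{u_r : r\in[k]\cap J\}$ is automatically multiplicative. The only (harmless) difference is that you expand all three arguments and apply the two clauses of Lemma \ref{lema4} pairwise, whereas the paper decomposes a single slot and cites the lemma directly; your version also spells out the basis-independence argument that the paper leaves implicit.
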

\begin{proof}
Since $T=\Bigl(\bigoplus\limits_{n \in I} \mathbb{F}e_n \Bigr) \oplus \Bigl(\bigoplus\limits_{r \in J} \mathbb{F}u_r \Bigr)$ we clearly have $T = \bigoplus\limits_{[h] \in (I \ud J)/\sim} T_{[h]}$ and so we can write $$\{T_{[k]},T, T\} + \{T,T_{[k]}, T\} + \{T,T,T_{[k]}\} = $$
$$\{T_{[k]},T,\bigoplus_{[h] \in (I \ud J)/\sim} T_{[h]}\} + \{T, T_{[k]},\bigoplus_{[h] \in (I \ud J)/\sim} T_{[h]}\} + \{\bigoplus_{[h] \in (I \ud J)/\sim} T_{[h]},T, T_{[k]}\}$$ $$\subset T_{[k]},$$ being last inclusion consequence of Lemma \ref{lema4}. That is, any $T_{[k]}$ is actually an ideal of $T$. Finally, observe that the set $$\{e_m : m \in [k]\cap I\} \ud \{u_s : s \in [k]\cap J\}$$ is a multiplicative basis of $T_{[k]}$ inherited from the basis ${\mathcal B} = {\mathcal B}_{\frak I} \ud {\mathcal B}_{\neg \frak I},$ with ${\mathcal B}_{\frak I} = \{e_n\}_{n \in I}$ and ${\mathcal B}_{\neg \frak I}= \{u_r\}_{r \in J},$ of $T$.
\end{proof}

\begin{theorem}\label{theo1}
Let $T$ be a Leibniz triple system admitting a multiplicative basis ${\mathcal B}$. Then $T$ is the orthogonal direct sum of the ideals
$$T = \bigoplus_{[k] \in (I \ud J)/\sim} T_{[k]},$$ admitting each $T_{[k]}$ a multiplicative basis inherited from $\mathfrak{B}$.
\end{theorem}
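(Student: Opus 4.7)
The plan is to combine everything built up in the section into a direct-sum/orthogonality statement, with Proposition \ref{abril} and Lemma \ref{lema4} doing most of the work. I would proceed in three logical stages, as follows.

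First, I would establish the decomposition as a direct sum of vector spaces. Since $\sim$ is an equivalence relation on $I \ud J$, the family $\{[k] : [k] \in (I \ud J)/\sim\}$ partitions $I \ud J$. By definition
\[
T_{[k]} = \Bigl(\bigoplus_{n \in [k] \cap I} \mathbb{F} e_n \Bigr) \oplus \Bigl(\bigoplus_{r \in [k] \cap J} \mathbb{F} u_r\Bigr),
\]
so each basis element of $\mathcal{B} = \mathcal{B}_{\frak I} \ud \mathcal{B}_{\neg \frak I}$ belongs to exactly one $T_{[k]}$. This immediately gives both $T = \sum_{[k]} T_{[k]}$ and the directness of the sum, and also shows that the basis inherited from $\mathcal{B}$ inside each $T_{[k]}$ (as in Proposition \ref{abril}) glues together to the full basis $\mathcal{B}$.

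Second, I would invoke Proposition \ref{abril} to conclude that each summand $T_{[k]}$ is an ideal of $T$ admitting a multiplicative basis inherited from $\mathcal{B}$. This is purely a quotation, so no new work is required here.

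Third, I would establish orthogonality. Given $[k] \neq [h]$ in $(I \ud J)/\sim$, I need to verify that all six mixed products
\[
\{T_{[k]},T,T_{[h]}\},\; \{T,T_{[k]},T_{[h]}\},\; \{T_{[k]},T_{[h]},T\},\; \{T,T_{[h]},T_{[k]}\},\; \{T_{[h]},T,T_{[k]}\},\; \{T_{[h]},T_{[k]},T\}
\]
vanish. Lemma \ref{lema4} directly handles $\{T_{[k]},T,T_{[h]}\}$ and $\{T,T_{[k]},T_{[h]}\}$, and by symmetry in the roles of $[k]$ and $[h]$ it also handles $\{T_{[h]},T,T_{[k]}\}$ and $\{T,T_{[h]},T_{[k]}\}$. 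For the two remaining terms $\{T_{[k]},T_{[h]},T\}$ and $\{T_{[h]},T_{[k]},T\}$, the argument of Lemma \ref{lema4} transposes almost verbatim: if such a product were nonzero one would produce a basis element $\{e_{n_0},u_{s_0},u_p\}$, $\{u_{r_0},u_{s_0},u_p\}$, etc., in $\mathbb{F}e_n$ or $\mathbb{F}u_r$, and the corresponding index would lie in $\mu(n_0,s_0,p) \subset \phi(\{n_0\},s_0,p)$, giving a connection forcing $[k]=[h]$, a contradiction. Thus all six terms vanish, so $T_{[k]}$ and $T_{[h]}$ are orthogonal whenever $[k] \neq [h]$.

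The only mildly delicate step is the last one: making sure that the orthogonality check really covers all six slots of the triple product, not just the two that appear verbatim in Lemma \ref{lema4}. I would handle this either by observing the two omitted slots reduce to the previous cases via the symmetries of $\mu$ recorded in Remark \ref{Remark} and Lemma \ref{lema1}, or, if that is not entirely transparent, by reproducing the case-analysis of Lemma \ref{lema4} once more for those two slots. Everything else is bookkeeping.
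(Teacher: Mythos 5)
Your proposal is correct and follows essentially the same route as the paper: the direct sum comes from the partition of $I \ud J$ into $\sim$-classes, Proposition~\ref{abril} supplies the ideal and inherited-basis claims, and Lemma~\ref{lema4} supplies orthogonality. Your extra attention to the two slots $\{T_{[k]},T_{[h]},T\}$ and $\{T_{[h]},T_{[k]},T\}$ that Lemma~\ref{lema4} does not literally cover is a point the paper's proof passes over in silence; your proposed fixes (the symmetry of $\mu$ in unbarred arguments, or rerunning the case analysis of Lemma~\ref{lema4}) both work, as does simply expanding the third slot as $T=\bigoplus_{[h']}T_{[h']}$ and applying Lemma~\ref{lema4} to each summand.
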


\begin{proof}
As in Proposition \ref{abril}, the fact $\displaystyle T = \Bigl(\bigoplus_{n \in I} \mathbb{F}e_n \Bigr) \oplus \Bigl(\bigoplus_{r \in J} \mathbb{F}u_r \Bigr)$ gives us $$T = \bigoplus_{[k] \in (I \ud J)/\sim} T_{[k]}.$$
Applying again Proposition \ref{abril} we have that each $T_{[k]}$ is an ideal admitting a multiplicative basis inherited, and by Lemma \ref{lema4} we get the orthogonality of the previous direct sum.
\end{proof}



\section{The minimal components}

In this section our goal is to characterize the minimality of the ideals which give rise to the decomposition of $T$ in Theorem \ref{theo1}, in terms of connectivity properties in the index set $I \ud J$.

\begin{definition}\rm
A Leibniz triple system $T$ admitting a multiplicative basis $\mathcal{B}$ is called {\it minimal} if its only nonzero ideals admitting a multiplicative basis inherited from $\mathcal{B}$ are $\frak I$ or $T$.
\end{definition}

Let us introduce the notion of $\mu$-multiplicativity in the framework of Leibniz triple systems admitting a multiplicative basis, in a similar way
to the ones of closed-multiplicativity for Lie triple systems (see \cite{Yo_simple_triple, Yo_split_triple, Yo_Forero2}), for split 3-Lie algebras (see \cite{3-Lie}), or for different classes of algebras like split Leibniz algebras or split Lie color algebras (see \cite{Yo3,Yo6} for these notions and examples).

\begin{definition}\label{defmulti}\rm
It is said that a Leibniz triple system $T$ admits a $\mu${\it-multiplicative basis} $\mathcal{B} = \mathcal{B}_{\frak I} \ud \mathcal{B}_{\neg \frak I},$ where $\mathcal{B}_{\frak I} = \{e_i\}_{i \in I}$ and $\mathcal{B}_{\neg \frak I} = \{u_r\}_{r \in J},$ if $\mathcal{B}$ is multiplicative and given $t_1, t_2 \in I \ud J$ and $s_1,s_2 \in J$ (resp. $s_1,s_2 \in \overline{J}$) such that $t_2 \in \mu(t_1,s_1,s_2),$ then  $v_{t_2} \in \mathbb{F}\{v_{t_1} ,u_{s_{\sigma(1)}},u_{s_{\sigma(2)}}\}$ (resp. $v_{t_2} \in \mathbb{F}\{v_{t_1},u_{\overline{s}_{\sigma(1)}},u_{\overline{s}_{\sigma(2)}}\}$) for some $\sigma \in S_2$, where any $v_{t_i} = e_{t_i}$ or $v_{t_i} = u_{t_i}$ depending on $t_i \in I$ or $t_i \in J$, $ i \in \{1,2\}$.
\end{definition}

\begin{proposition}\label{nueva12}
Suppose $T$ admits a $\mu$-multiplicative basis ${\mathcal B}.$ If $J$ has all of its elements connected, then any nonzero ideal ${\mathcal I}$ of $T$ with a multiplicative basis inherited from ${\mathcal B}$ such that ${\mathcal I} \not\subset {\frak I}$ satisfies ${\mathcal I} = T$.
\end{proposition}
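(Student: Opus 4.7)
The plan is to show $\mathcal{I}\supset\neg\mathfrak{I}$ and $\mathcal{I}\supset\mathfrak{I}$ separately, which together will yield $\mathcal{I}\supset\mathfrak{I}\oplus\neg\mathfrak{I}=T$. Because $\mathcal{B}_\mathcal{I}\subset\mathcal{B}$ and $\mathcal{I}\not\subset\mathfrak{I}$, I first fix some $r_0\in J$ with $u_{r_0}\in\mathcal{B}_\mathcal{I}\subset\mathcal{I}$; this element is the anchor from which the rest of the basis of $T$ is recovered.

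For the first inclusion, I fix an arbitrary $s\in J$. By the connectedness hypothesis $s\sim r_0$, hence there is a connection $\{r_0=k_1,k_2,\dots,k_{2N+1}\}$ from $r_0$ to $s$. Remark \ref{Remark2} forces $k_2,\dots,k_{2N+1}\in J\ud\overline{J}$, and the definition of $\mu$ further forces each pair $(k_{2j},k_{2j+1})$ to be homogeneous (both in $J$, or both in $\overline{J}$), since mixed pairs would make the corresponding $\phi$-step empty. I then induct along the chain: if the basis vector $v_t$ at the current index lies in $\mathcal{I}$, then by Definition \ref{defmulti} the basis vector at the next index is a nonzero scalar multiple of $\{v_t,u_{p_1},u_{p_2}\}$ for appropriate $p_1,p_2\in J$, and since $\mathcal{I}$ is an ideal this sits in $\{\mathcal{I},T,T\}\subset\mathcal{I}$. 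Starting from $u_{r_0}\in\mathcal{I}$, the induction ends at $u_s\in\mathcal{I}$, giving $\neg\mathfrak{I}\subset\mathcal{I}$.

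For the second inclusion, I use that $\mathfrak{I}$ is generated as an ideal by the Leibniz defects $D(x,y,z):=\{x,y,z\}-\{x,z,y\}+\{y,z,x\}$, so it suffices to prove $D(v_a,v_b,v_c)\in\mathcal{I}$ for every triple of basis vectors. When some $v_i$ equals an $e_n\in\mathcal{B}_\mathfrak{I}$, equation (\ref{equi}) annihilates every term of $D$ whose second or third argument is $e_n$; any surviving term has $e_n$ in the first slot and at least one $u$-basis element in the second or third slot, and such a $u$-vector lies in $\mathcal{I}$ by the previous step, so the survivor sits in $\{T,\mathcal{I},T\}+\{T,T,\mathcal{I}\}\subset\mathcal{I}$. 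When all three of $v_a,v_b,v_c$ lie in $\mathcal{B}_{\neg\mathfrak{I}}$, each of the three terms of $D$ sits directly in $\{\mathcal{I},T,T\}\subset\mathcal{I}$. Hence $D(v_a,v_b,v_c)\in\mathcal{I}$ in every configuration, the ideal closure of these defects (namely $\mathfrak{I}$) is contained in $\mathcal{I}$, and combined with the first inclusion this gives $\mathcal{I}=T$.

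The main obstacle is that the connectedness hypothesis is imposed only on $J$, so there is no guarantee that a chain of $\mu$-multiplicative steps starting at $u_{r_0}$ reaches any $e_n\in\mathcal{B}_\mathfrak{I}$; the first-step strategy cannot be directly adapted to the $\mathfrak{I}$-part of the basis. The detour through Leibniz defects therefore requires a careful case analysis on where each of $v_a,v_b,v_c$ sits, using equation (\ref{equi}) to kill the terms that would otherwise lie outside $\mathcal{I}$ and then identifying, in every surviving term, a slot occupied by an element already proved to be in $\mathcal{I}$.
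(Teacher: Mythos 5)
Your proposal is correct and follows essentially the same route as the paper's proof: anchor at some $u_{r_0}\in\mathcal{I}$, propagate along a connection using $\mu$-multiplicativity to obtain $\neg\mathfrak{I}\subset\mathcal{I}$, and then recover $\mathfrak{I}$ from Equations (\ref{gentle}) and (\ref{equi}), which force $\mathfrak{I}\subset\{\mathfrak{I},\neg\mathfrak{I},\neg\mathfrak{I}\}+\{\neg\mathfrak{I},\neg\mathfrak{I},\neg\mathfrak{I}\}\subset\mathcal{I}$. Your explicit case analysis on the defects $D(v_a,v_b,v_c)$ is just a spelled-out version of that last inclusion, so the two arguments coincide in substance.
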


\begin{proof}
Since ${\mathcal I} \not\subset {\frak I}$ we can take some $r_1 \in J$ such that
\begin{equation}\label{as}
0 \neq u_{r_1} \in  {\mathcal I}.
\end{equation}
We know that $J$ has all of their elements connected. If $J = \{r_1\}$ trivially $\neg \frak I \subset \mathcal{I}$. If $|J| > 1$ we take $s \in J \setminus \{r_1\}$, being then $0 \neq u_s$, we can consider a connection
\begin{equation}\label{5}
\{r_1,r_2,...,r_{2n+1}\} \subset J \ud \overline{J}
\end{equation}
from $r_1$ to $s$.

We know that the subset $\phi(\{r_1\}, r_2,r_3)$ of $I \ud J$ is non empty, and we can take $a_1 \in \phi(\{r_1\}, r_2,r_3) = \mu(r_1,r_2,r_3)$. Now taking into account Equation (\ref{as}) and the $\mu$-multiplicativity of ${\mathcal B}$ we get either
$$0 \neq v_{a_1} \in  {\mathbb F}\{u_{r_{\sigma(1)}}, u_{r_{\sigma(2)}}, u_{r_{\sigma(3)}}\} \subset {\mathcal I}$$ for some $\sigma \in S_3$, or
$$0 \neq v_{a_1} \in {\mathbb F}\{u_{r_1}, u_{\overline{r}_{2}}, u_{\overline{r}_{3}}\} + {\mathbb F}\{u_{r_1},u_{\overline{r}_{3}}, u_{\overline{r}_{2}}\}\subset {\mathcal I}$$ where $v_{a_1} = e_{a_1}$ or $v_{a_1} = u_{a_1}$ depending on $a_1 \in I$ or $a_1 \in J$.

Since $s \in J$, necessarily $\phi(\{r_1\},r_2,r_3) \cap J \neq \emptyset$ and we have
\begin{equation}\label{7}
0 \neq \bigoplus\limits_{r \in \phi(\{r_1\}, r_2,r_3) \cap J} {\mathbb F}u_r \subset {\mathcal I}.
\end{equation}
Since $$\phi(\phi(\{r_1\}, r_2, r_3),r_4,r_5) \neq \emptyset$$ we can argue as above, considering Equation (\ref{7}), to get $$0 \neq \bigoplus\limits_{r \in \phi(\phi(\{r_1\},r_2,r_3),r_4,r_5) \cap J} {\mathbb F}u_r \subset {\mathcal I}.$$ By reiterating this process with the connection (\ref{5}) we obtain $$0 \neq \bigoplus\limits_{r \in \phi(\phi (\cdots (\phi(\{r_1\}, r_2,r_3), \cdots), r_{2n-2}), r_{2n-1}) \cap J} {\mathbb F}u_r \subset {\mathcal I}.$$ Since $s \in \phi(\phi (\cdots (\phi(\{r_1\},r_2,r_3),\cdots),r_{2n}),r_{2n+1}) \cap J$ we conclude
$u_s \in {\mathcal I}$ for all $s \in J \setminus \{r_0\}$ and so
\begin{equation}\label{b1}
{\neg \frak I} = \bigoplus\limits_{r \in {J}}{\mathbb F} u_r \subset {\mathcal I}.
\end{equation}
By Equations (\ref{gentle}) and (\ref{equi}) we have that ${\frak I} \subset \{{\frak I}, {\neg \frak I}, {\neg \frak I}\}+ \{{\neg \frak I}, {\neg \frak I}, {\neg \frak I}\}$. Then Equation (\ref{b1}) allows us to assert
\begin{equation}\label{b2}
{\frak I} \subset {\mathcal I}.
\end{equation}
Finally, since $T = {\frak I} \oplus \neg \frak I$, Equations (\ref{b1}) and (\ref{b2}) give us $ {\mathcal I} = T$.
\end{proof}

\begin{proposition}\label{propoI}
Suppose $T$ admits a $\mu$-multiplicative basis ${\mathcal B}$ and $I$ has all of its elements connected, then any nonzero ideal ${\mathcal I}$ of $T$ with a multiplicative basis inherited from ${\mathcal B}$ such that ${\mathcal I} \subset {\frak I}$ satisfies ${\mathcal I} = {\frak I}$.
\end{proposition}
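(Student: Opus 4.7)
The plan is to transpose the argument of Proposition \ref{nueva12}, swapping the roles of $\neg\mathfrak{I}$ and $J$ for those of $\mathfrak{I}$ and $I$, and to exploit Equation (\ref{equi}): any nonzero ternary product whose first factor lies in $\mathfrak{I}$ remains in $\mathfrak{I}$, while a product with an $\mathfrak{I}$-factor in the second or third slot vanishes. Since $\mathcal{I}$ is nonzero, admits a multiplicative basis inherited from $\mathcal{B}$, and sits inside $\mathfrak{I}$, that inherited basis is a subset of $\mathcal{B}_{\mathfrak{I}}=\{e_n\}_{n\in I}$, so I can fix $n_1\in I$ with $0\neq e_{n_1}\in\mathcal{I}$. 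It then suffices to show $e_m\in\mathcal{I}$ for an arbitrary $m\in I$.

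If $m=n_1$ there is nothing to prove. Otherwise pick a connection $\{n_1,k_2,\ldots,k_{2p+1}\}$ from $n_1$ to $m$; by Remark \ref{Remark2} its tail lies in $J\ud\overline{J}$. I would argue by induction on $j=1,\ldots,p$ that every element of
$$X_j := \phi(\phi(\cdots\phi(\{n_1\},k_2,k_3)\cdots),k_{2j},k_{2j+1})$$
belongs to $I$ and that $e_n\in\mathcal{I}$ for every $n\in X_j$. For $j=1$, take any $a_1\in\phi(\{n_1\},k_2,k_3)=\mu(n_1,k_2,k_3)$; the $\mu$-multiplicativity condition, in its unbarred or barred form according as $k_2,k_3\in J$ or $k_2,k_3\in\overline{J}$, writes $v_{a_1}$ as a scalar multiple of a ternary product whose first factor is $e_{n_1}$ and whose other two factors are basis elements of $\neg\mathfrak{I}$. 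Such a product lies in $\{e_{n_1},T,T\}\subset\mathcal{I}$ because $e_{n_1}\in\mathcal{I}$ and $\mathcal{I}$ is an ideal, and it lies in $\mathfrak{I}$ by Equation (\ref{equi}); the latter forces $v_{a_1}=e_{a_1}$ with $a_1\in I$, while the former gives $e_{a_1}\in\mathcal{I}$. The inductive step replays the same reasoning verbatim, using any $n\in X_{j-1}\subset I$ in place of $n_1$.

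Since $m\in X_p$, the induction yields $e_m\in\mathcal{I}$. As $m\in I$ was arbitrary, $\mathfrak{I}=\bigoplus_{n\in I}\mathbb{F}e_n\subset\mathcal{I}$; combined with the standing inclusion $\mathcal{I}\subset\mathfrak{I}$ this delivers $\mathcal{I}=\mathfrak{I}$.

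The delicate point, and where I expect to spend the most care, is verifying that the iteration genuinely stays inside $I$: in the barred case the map $b$ formally allows the output to land in $J$, so one must invoke $\mu$-multiplicativity together with Equation (\ref{equi}) to rule this out, using that no nonzero $u_s$ can be proportional to a product of the form $\{v,u,u\}$ with $v\in\mathfrak{I}$. Once that compatibility is nailed down, the remainder is a direct transcription of the proof of Proposition \ref{nueva12}.
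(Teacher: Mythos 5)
Your proof is correct and takes essentially the same route as the paper, whose own proof simply says to repeat the argument of Proposition \ref{nueva12} starting from $e_{n_0}$; you have moreover supplied the one detail that transposition actually requires, namely that $\mu$-multiplicativity forces every iterate $X_j$ to stay inside $I$ (since a basis element $u_s$ with $s\in J$ cannot be a nonzero multiple of a product $\{e_n,u,u\}\in\{{\frak I},T,T\}\subset{\frak I}$). One cosmetic point: that last containment comes from ${\frak I}$ being an ideal, not from Equation (\ref{equi}), which only governs products with an ${\frak I}$-factor in the second or third slot.
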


\begin{proof}
Taking into account ${\mathcal I} \subset {\frak I}$ we can fix some $n_0 \in I$ satisfying $$0 \neq e_{n_0} \in {\mathcal I}.$$ Since $I$ has all of its elements connected, we can argue from $n_0$ with the $\mu$-multiplicativity of ${\mathcal B}$ as it is done in Proposition \ref{nueva12} from $r_0$ to get ${\frak I} \subset {\mathcal I}$ and then ${\mathcal I}={\frak I}$.
\end{proof}

\begin{theorem}\label{theo2}
Let $T$ be a Leibniz triple system admitting a $\mu$-multiplicative basis $\mathcal{B}.$ The Leibniz triple system $T$ is minimal if and only if the index sets $I,J$ have all of its elements connected respectively.
\end{theorem}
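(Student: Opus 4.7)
The plan is to treat the two directions separately: the implication ``connected $\Rightarrow$ minimal'' will be an almost immediate consequence of Propositions \ref{nueva12} and \ref{propoI}, while the converse will rest on the orthogonal decomposition of Theorem \ref{theo1} together with a short direct-sum argument. In neither direction do I expect to do any new calculation with the product; all the heavy lifting is done by the machinery of $\S 2$ and the two propositions that precede the theorem.

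For the sufficiency, I would fix a nonzero ideal $\mathcal{I}$ of $T$ admitting a multiplicative basis inherited from $\mathcal{B}$, and split the argument according to whether $\mathcal{I}\subset \mathfrak{I}$. If $\mathcal{I}\subset\mathfrak{I}$, the connectedness of $I$ together with Proposition \ref{propoI} gives $\mathcal{I}=\mathfrak{I}$; if $\mathcal{I}\not\subset \mathfrak{I}$, the connectedness of $J$ together with Proposition \ref{nueva12} gives $\mathcal{I}=T$. Since the only admissible nonzero ideals of $T$ are therefore $\mathfrak{I}$ and $T$, the system is minimal by definition.

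For the necessity, I would start from Theorem \ref{theo1} and Proposition \ref{abril} to present
\[
T=\bigoplus_{[k]\in (I\ud J)/\sim} T_{[k]}
\]
as an orthogonal direct sum of nonzero ideals, each carrying an inherited multiplicative basis. Minimality then forces each $T_{[k]}$ to coincide with $\mathfrak{I}$ or with $T$. I would next rule out the presence of two distinct classes $[k_1]\neq [k_2]$: in a direct sum one has $T_{[k_1]}\cap T_{[k_2]}=0$, while the intersection of any two members of $\{\mathfrak{I},T\}$ is itself $\mathfrak{I}$ or $T$, and requiring this intersection to vanish collapses one of the two summands to zero, contradicting $T_{[k]}\neq 0$. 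Hence $(I\ud J)/\sim$ has a single class, which in particular means that any two elements of $I$, and any two elements of $J$, are connected.

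The point that I expect to require care (rather than calculation) is verifying the applicability of the minimality hypothesis on each side of the equivalence: one must note that $\mathfrak{I}$ itself admits the inherited multiplicative basis $\mathcal{B}_{\mathfrak{I}}\subset\mathcal{B}$, so that the definition of minimality really does single out two distinct privileged ideals, and one must also cite Proposition \ref{abril} to guarantee that each summand $T_{[k]}$ is indeed an ideal with an inherited multiplicative basis and hence eligible for the minimality dichotomy. The vacuous edge cases $I=\emptyset$ or $J=\emptyset$ present no real obstacle under the convention that connectedness on an empty set holds trivially.
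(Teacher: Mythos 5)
Your proposal is correct and follows essentially the same route as the paper: sufficiency is exactly the paper's appeal to Propositions \ref{propoI} and \ref{nueva12} according to whether the ideal lies in $\mathfrak{I}$, and necessity rests, as in the paper, on the orthogonal decomposition of Theorem \ref{theo1} together with the minimality dichotomy for each $T_{[k]}$. The only (harmless) variation is how you force a single equivalence class: you intersect two putative summands, whereas the paper observes that some summand must equal $T$ (otherwise the sum would be $\mathfrak{I}\neq T$) and hence its class is all of $I \ud J$.
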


\begin{proof}
Suppose $T$ is minimal. From Proposition \ref{abril} we have that all nonzero ideal $T_{[k]}$ must be $\frak I$ or $T$. We distinguish two possibilities. If $\neg \frak I = \emptyset$, all $T_{[k]} = \frak I = T$ and all elements in $I$ are connected. If $\neg \frak I \neq \emptyset$, suppose all $T_{[k]} = \frak I,$ then we get a contradiction with $T = \frak I \oplus \neg \frak I$ and Theorem \ref{theo1}. Therefore exists at least one $[k_0] \in (I \ud J)/\sim$ such that $T_{[k]} = T,$ then $[k] = I \ud J$ and so any couple of elements in $I \ud J$ are connected, in particular $I,J$ have all of their  elements connected respectively.

Conversely, consider a nonzero ideal $\mathcal{I}$ of $T$ admitting a multiplicative basis inherited from $\frak B$. If $\mathcal{I} \subset \frak I,$ from $I$ has all its elements connected, by Proposition \ref{propoI} is $\mathcal{I} = \frak I$. Otherwise $\mathcal{I} \not\subset \frak I$, from $J$ has all its elements connected, by Proposition \ref{nueva12} we have $\mathcal{I} = T$.
\end{proof}

\begin{theorem}
Let $T$ be a Leibniz triple system  admitting a $\mu$-multiplicative basis $\mathfrak B$. Then $T$ is the orthogonal direct sum of the family of its minimal ideals $$T=\bigoplus_k \mathfrak I_k,$$ each one admitting a $\mu$-multiplicative basis inherited from $\mathfrak B$.
\end{theorem}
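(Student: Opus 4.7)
The plan is to combine the two earlier theorems: Theorem~\ref{theo1} supplies the orthogonal decomposition $T = \bigoplus_{[k] \in (I \ud J)/\sim} T_{[k]}$ by ideals admitting multiplicative bases inherited from $\mathfrak{B}$, and Theorem~\ref{theo2} characterises minimality in terms of connectivity of the index sets. What remains is to upgrade ``multiplicative'' to ``$\mu$-multiplicative'' for the inherited bases, and to verify that each summand $T_{[k]}$ is minimal as a Leibniz triple system in its own right.

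First I would check that the inherited basis $\mathfrak{B}_{[k]}=\{e_m : m \in [k]\cap I\} \ud \{u_s : s \in [k]\cap J\}$ of $T_{[k]}$ is $\mu$-multiplicative. This should be essentially automatic: since the ternary product of $T_{[k]}$ is the restriction of the one on $T$, the auxiliary maps $a$, $b$, $\mu$ computed inside $T_{[k]}$ on indices from $[k]$ coincide with those of $T$ restricted to the same indices. The hypothesised $\mu$-multiplicativity of $\mathfrak{B}$ therefore transfers verbatim to $\mathfrak{B}_{[k]}$.

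Next I would apply Theorem~\ref{theo2} to each $T_{[k]}$, which asks me to verify that $[k]\cap I$ and $[k]\cap J$ have all their elements connected \emph{within} $T_{[k]}$. This is where the real content lies: a priori a connection witnessing $k \sim k'$ in $T$ could pass through intermediate indices $k_2,\dots,k_{2n+1} \in J \ud \overline{J}$ lying outside $[k]$, whereas a connection internal to $T_{[k]}$ requires all intermediate indices to sit in $([k]\cap J) \ud \overline{[k]\cap J}$. This is the main obstacle.

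To resolve it, I would argue, in the spirit of Lemma~\ref{lema4}, that every intermediate index is forced to belong to $[k]$. Suppose $k_1 \in [k]$ and $\mu(k_1,k_2,k_3)\neq\emptyset$ with $k_2,k_3 \in J$; the nonzero product realising this membership, together with the $S_3$-symmetry of $\mu$ on $(I \ud J)^3$ recorded in Remark~\ref{Remark}, furnishes the one-step connections $\{k_2,k_1,k_3\}$ and $\{k_3,k_1,k_2\}$ in addition to $\{k_1,k_2,k_3\}$, so $k_2$ and $k_3$ are each connected to every $x \in \phi(\{k_1\},k_2,k_3)$. In particular $k_2, k_3 \in [k]$ and $\phi(\{k_1\},k_2,k_3) \subset [k]$. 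The case $k_2,k_3 \in \overline{J}$ is handled symmetrically, using Lemma~\ref{lema1} to convert between $\mu(\cdot,\overline{r},\overline{r}')$ and $\mu(\cdot,r,r')$. Iterating this observation along an arbitrary connection from $k$ to $k' \in [k]$ produces a connection realised inside $T_{[k]}$; Theorem~\ref{theo2} then delivers minimality of each $T_{[k]}$, completing the proof.
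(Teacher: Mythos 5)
Your proposal is correct and follows essentially the same route as the paper: invoke Theorem~\ref{theo1} for the orthogonal decomposition, note that the inherited bases remain $\mu$-multiplicative thanks to the orthogonality of the summands, and then show via the symmetries of Remark~\ref{Remark} and Lemma~\ref{lema1} that every intermediate index of a connection between elements of $[k]$ must itself lie in $[k] \ud \overline{[k]}$, so Theorem~\ref{theo2} applies to each $T_{[k]}$. This matches the paper's proof step for step.
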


\begin{proof}
By Theorem \ref{theo1} we have that $T=\displaystyle\bigoplus_{[k] \in (I \ud J)/\sim} T_{[k]}$ is the orthogonal direct sum of the ideals $T_{[k]}$, admitting each $T_{[k]}$ a multiplicative basis $\mathfrak B_{[k]} := \{e_n: n \in [k]\cap I\} \ud \{u_r: r \in [k]\cap J\}$ inherited from $\mathfrak B$.

We wish to apply Theorem \ref{theo2} to each $T_{[k]}$ so we are going to verify that the basis $\mathfrak B_{[k]}$ is $\mu$-multiplicative and that $[k]$ has all of its elements  $[k]$-connected, that is, connected through elements contained in $[k]$ or its respective symbols $\overline{[k]} := \{\overline{h} : h \in [k]\}$.

\noindent We clearly have that $T_{[k]}$ admits $\mathfrak B_{[k]}$ as $\mu$-multiplicative basis as consequence of having a basis inherited from $\mathfrak B$ and the fact $\{T_{[k]},T, T_{[h]}\} + \{T,T_{[k]},T_{[h]}\}  = 0$ when $[k] \neq [h]$.

Finally, let $k',k'' \in [k]$ and consider a connection from $k'$ to $k''$
\begin{equation}\label{eqq7}
\{k',j_2,\dots,j_{2n+1}\}.
\end{equation}

\noindent Let $x \in \phi(\{k'\},j_2,j_3)$. Then $x \in [k].$ We are going to check that the connection $\{k',j_2,j_3\}$ satisfies $k',j_2,j_3 \in [k]$. Clearly $k' \in [k]$ and we have two possibilities:

\begin{itemize}
\item If $j_2,j_3 \in J$, we also have, see Remark \ref{Remark}, that $x \in \phi(\{j_2\},k',j_3)$ and $x \in \phi(\{j_3\},k',j_2)$. From here, the connections $\{j_2,k',j_3\}$ and $\{j_3,k',j_2\}$ give us $j_2 \sim x$ and $j_3 \sim x,$ respectively. Hence $j_2,j_3 \in [k]$.

\item If $j_2,j_3 \in \overline{J}$, by Lemma \ref{lema1} we have $k' \in \phi(\{x\},\overline{j}_2,\overline{j}_3)$. Arguing as above we get $\overline{j}_2,\overline{j}_3 \in {[k]}$ and so $j_2, j_3 \in \overline{[k]}$.
\end{itemize}
By iterating this process we obtain that all of the elements in the connection (\ref{eqq7}) are contained in $[k] \ud \overline{[k]}$. That is, $[k]$ has all of its elements $[k]$-connected. From the above, we can apply Theorem \ref{theo2} to any $T_{[k]}$ so as to conclude $T_{[k]}$ is minimal.

\noindent It is clear that the decomposition $T = \displaystyle\bigoplus_{[k] \in (I \ud J)/\sim} T_{[k]}$ satisfies the assertions of the theorem and the proof is completed.
\end{proof}

\medskip

\begin{remark}\rm
We recall that an {\it arbitrary triple system} ${\frak T}$  is just a vector space endowed with a trilinear map  $$\langle \cdot, \cdot, \cdot \rangle: {\frak T} \times {\frak T} \times {\frak T} \to {\frak T},$$ called its {\it triple product}, and where any identity, (associativity, Lie, Jordan, Leibniz, etc.), is not supposed  to be satisfied by the triple product.

\medskip

We would like to know that the identities which define a Leibniz triple system, given in Definition \ref{deffi}, are only used in the development of the preset work to verify that  the ideal $ \frak I$, (see Equation (\ref{gentle})),  satisfy  the identities (\ref{equi}). From here, the results getting in this paper not only hold for the class of Leibniz triple systems, but also hold for any arbitrary  triple system ${\frak T}$  such that can be written as the direct sum of two linear subspaces $${\frak T}= {\frak I} \oplus V$$ where ${\frak I}$ is an ideal of ${\frak T}$ satisfying $$ \langle {\frak T}, {\frak I}, {\frak T} \rangle  = \langle {\frak T}, {\frak T}, {\frak I}\rangle = 0.$$
\end{remark}


\end{document}